\documentclass[a4paper, 12pt,oneside,reqno]{amsart}
\usepackage[a4paper]{geometry}
\geometry{hmargin=2cm,top=3cm,bottom=3cm}
\usepackage{graphicx}
\usepackage[matrix,arrow,curve,cmtip]{xy}
\usepackage{amsfonts}
\usepackage{hyperref}

%% Lie -- PreLie , As -- preAs : всегда вложение и PBW
% Com:
% Есть идемпотент ==> нет вложения в Zinb
% A=xF[x]: нет идемпотентов: e^2\ne e
% не нильпотентная
% Вкладывается ли в Zinb? 
% Похоже, что ДА: xF[x] с Роты - Бакстером R = \int_0^x
% НО: конечномерная не нильпотентная содержит идемпотенты

\usepackage{txfonts}
\DeclareMathAlphabet{\mathcal}{OMS}{cmsy}{m}{n} % but do not change mathcal symbols

\usepackage{
  amsmath,  % Contains many useful mathematical typesetting tools.
  amssymb,  % Contains many useful mathematical symbols.
  amsthm,   % Contains tools for theorem-like environments
  tikz,     % A versatile tool to draw diagrams
  fullpage, % More reasonable margins
  youngtab, % To draw young tableaux.
  ytableau, % this is the best young tableaux packege,
  thmtools, % Lets us define theorem-like environments easily.
  hyperref, % Better references.
  cite,     % Better citations.
  url       % URL handling
}

%packesges
\usepackage{amscd}
\usepackage{euscript}
\usepackage{latexsym}
\usepackage[cp1251]{inputenc}
\usepackage[english]{babel}
\usepackage{mathrsfs}
\usepackage{graphicx}
\usepackage{keyval}
\usepackage{mathtools}
\usepackage{tikz}
\usetikzlibrary{arrows,shapes,snakes,automata,backgrounds,petri,through,positioning}
\usetikzlibrary{intersections}
\usepackage[symbol*]{footmisc}
\usepackage{verbatim}
\usepackage{tikz-cd}

\usepackage{etoolbox}
\patchcmd{\thebibliography}{\section*}{\paragraph}{}{}

\numberwithin{equation}{section}

%Theorem Environments
\newtheorem{theorem}{Theorem}[section]
\newtheorem{lemma}[theorem]{Lemma}
\newtheorem{proposition}[theorem]{Proposition}
\newtheorem{corollary}[theorem]{Corollary}

\theoremstyle{definition}
\newtheorem{definition}[theorem]{Definition}
\newtheorem{example}[theorem]{Example}

\title{On the pre-commutative envelopes
of commutative algebras}

\author{H. Alhussein$^{1),2),3)}$}
\author{P. Kolesnikov$^{4)}$}

\address{$^{1)}$Novosibirsk State University, Novosibirsk,  Russia.}
\address{$^{2)}$Siberian State University of Telecommunication and Informatics, Novosibirsk, Russia.}
\address{$^{3)}$Novosibirsk State University of Economics and Management, Novosibirsk,  Russia.}

\address{$^{4)}$Sobolev Institute of Mathematics, Novosibirsk, Russia.}

\begin{document}
\begin{abstract}
We prove that every 
nilpotent commutative algebra can be embedded 
into a pre-commutative (Zinbiel) algebra with respect to the anti-commutator operation. 
For finite-di\-men\-sional algebras,
the nilpotency condition is necessary 
for a commutative algebra to have a pre-commutative
envelope.
\end{abstract}

\maketitle
\section{Introduction}

The classical Poincar\'e--Birkhoff--Witt Theorem (PBW-Theorem) for Lie algebras gave rise to a series of generalizations to various 
multiplication changing functors between varieties 
of algebras. 
Namely, suppose
$V$ and $W$ are two (linear) operads governing the varieties of algebras called $V$-algebras and 
$W$-algebras, respectively. 
A morphism of operads $\omega: W \to  V$ induces
a functor from the variety of $V$-algebras to the variety of $W$-algebras: $B\mapsto B^{(\omega )}$, 
$B\in V$. 
Such functors
are called multiplication changing ones 
(see, e.g., \cite{Mikh}).
One of the most common examples is given by 
the morphism of operads 
$(-):\mathrm{Lie} \to \mathrm{As}$
such that $x_1x_2\mapsto x_1x_2-x_2x_1$. 
The corresponding functor between varieties 
$\mathrm{As}\to \mathrm {Lie}$
transforms an associative algebra $A$ into its 
commutator Lie algebra~$A^{(-)}$.

Hereinafter we do not distinguish notations for 
multilinear varieties of algebras and their 
governing operads. For an operad $V$ and a nonempty 
set $X$, denote by $V\langle X\rangle $ 
the free $V$-algebra generated by~$X$.

Every multiplication changing functor has left
adjoint functor which sends an arbitrary 
$W$-algebra $A$ to its universal enveloping $V$-algebra $U_\omega(A)$. 
If $A$ is a $W$-algebra generated by a set $X$ relative to defining relations 
$R\subset W\langle X\rangle $
then $U_\omega (A)$ is the $V$-algebra 
generated by the same set $X$ relative 
to the relations $\omega (R)\subset V\langle X\rangle $.

The canonical homomorphism
$i: A\to U_\omega(A)$  may not be
injective in general.
Moreover, $U_\omega(A)$ carries
a natural ascending filtration
relative to degrees in $i(A)$, 
and its associated graded algebra 
$\mathrm{gr}\,U_\omega(A)$ is also a $V$-algebra. 
As it was proposed in \cite{Mikh}, let us say the triple $(V, W, \omega)$ to have the PBW-property
if $i$ is injective and
\[
\mathrm{gr}\,U_\omega (A) \simeq U_\omega (A^{(0)})
\]
as $V$-algebras, where $A^{(0)}$ stands for the $W$-algebra on the space~$A$
with trivial (zero) operations.

Many combinatorial 
and homological 
properties of $V$- and $W$-algebras
are closely related if 
$(V, W, \omega)$ has the PBW-property, 
see, e.g. \cite{Mikh, DotsBMS}.

A series of operation-transforming functors 
are related with so called
dendriform splitting of varieties. 
The term ``dendriform algebra'' was introduced by 
J.-L.~Loday 
\cite{Loday2001} in the associative context, but it can 
be defined for an arbitrary variety (see, e.g., \cite{BaiGuoNi-2013, GubKol-2013}). 
Namely, for every multilinear 
variety $V$ of algebras there is a 
variety denoted 
$\mathrm{pre}\,V$. 
The defining identities of $\mathrm{pre}\,V$
can be calculated by means of a routine procedure 
(called {\em splitting})
described in \cite{BaiGuoNi-2013} 
or \cite{GubKol-2013}, 
see also \cite{GubKol-2014}. 

In particular, for every $\mathrm{pre}\,V$-algebra 
$A$ with operations $\succ$ and $\prec $ 
the same space $A$ relative to the operation 
\begin{equation}\label{eq:SplitOperation}
ab = a\succ b + a\prec b, \quad a,b\in A,
\end{equation}
is a $V$-algebra. Thus we have a morphism 
of operads $\varepsilon : V\to \mathrm{pre}\,V$
which maps $x_1x_2$ to $x_1\succ x_2 + x_1\prec x_2$.
The corresponding operation-changing functor 
between varieties of algebras is also denoted $\varepsilon $, 
so that if $A\in \mathrm{pre}\,V$ then $A^{(\varepsilon)}\in V$.

The corresponding left adjoint functor 
was previously studied for 
$V=\mathrm{Lie}$ \cite{Bokut.chen}
and $V=\mathrm{As}$ \cite{Koles}.
In both cases, the triple 
$(\mathrm{pre}\,V, V, \varepsilon)$
has the PBW-property. 

In this paper, we consider the triple 
$(\mathrm{pre}\,\mathrm{Com}, \mathrm{Com}, \varepsilon)$, 
where $\mathrm{Com}$ is the variety of associative 
and commutative algebras. In this case, 
the operad 
$\mathrm{pre}\,\mathrm{Com}$ corresponds 
to the variety of {\em Zinbiel} algebras \cite{Loday2001}, 
linear spaces with one bilinear multiplication
satisfying the identity
\begin{equation}\label{eq:ZinbIdent}
x(yz)=(xy)z+(yx)z.
\end{equation}
(This operad is Koszul dual to the operad governing 
the class of Leibniz algebras, the term ``Zinbiel'' 
is motivated by this observation.)

The functor $\varepsilon $ mentioned above 
is natural to denote $(+)$ is this particular case: 
every $\mathrm{pre}\,\mathrm{Com}$-algebra $Z$
turns into a commutative algebra 
relative to the operation 
\[
a*b = ab+ba,\quad a,b\in Z.
\]

It turns out that the case $V=\mathrm{Com} $ essentially differs from the cases $V=\mathrm{Lie}$ or $\mathrm{As}$.
It is not hard to see that not every commutative 
algebra $A$
embeds into its universal enveloping $U_{(+)}(A)$, 
so there is no hope for the PBW-property 
to hold for the triple 
$(\mathrm{pre}\,\mathrm{Com}, \mathrm{Com}, (+))$. 

It was shown in \cite{Dzhtul,2023} that a finite-dimensional pre-commutative algebra is nilpotent. 
On the other hand, it is easy to see that 
if a finite-dimensional commutative algebra $A$
embeds into a pre-commutative algebra then 
$A$ must be nilpotent. Our results, 
in particular, show the converse:
every nilpotent commutative algebra 
$A$ embeds into an appropriate 
pre-commutative algebra. 

For a trivial algebra $A$ (with zero multiplication) 
we compute the Gr\"obner--Shirshov basis of its 
universal enveloping Zinbiel algebra 
which may be considered as a ``pre-algebra analogue'' 
of the symmetric algebra of a linear space.

\section{Dendriform splitting and Zinbiel algebras}

Let $V$ be a class of all algebras satisfying a given set $\Sigma $ of multilinear identities (i.e., 
$V$ is a variety defined by $\Sigma $). 
Each identity $f\in \Sigma $ 
is an element  
from the free non-associative (magmatic) 
algebra 
$M\langle x_1,x_2,\ldots \rangle $
which is homogeneous of degree $n=\deg f$ 
and multilinear in the variables $x_1,\dots ,x_n$.
(For simplicity, we consider algebras with one binary product.)

For example, the class $\mathrm{Perm}$ is defined 
by two identities
\[
(x_1x_2)x_3 - x_1(x_2x_3), \quad x_1(x_2x_3)-x_2(x_1x_3),
\]
these are left-commutative associative algebras 
also known as Perm-algebras \cite{Chapoton}.

Construct a class of algebras $\mathrm{pre}\,V$
with two binary products as follows 
\cite{GubKol-2014}.
A linear space $A$ equipped with two operations 
denoted $\prec $ and $\succ $ belongs to 
$\mathrm{pre}\,V$ if and only if 
for every Perm-algebra $P$ the space 
$P\otimes A$
equipped with multiplication
\[
(p\otimes a)(q\otimes b) = pq\otimes (a\succ b)
+qp\otimes (a\prec b), \quad p,q\in P,\ a,b\in A,
\]
belongs to the class $V$.

In particular, if $P=\Bbbk $ then it follows 
immediately from the definition that a $\mathrm{pre}\,V$-algebra $A$ relative to the operation $(a,b)\mapsto ab$
given by \eqref{eq:SplitOperation}
is an algebra from~$V$.

The passage from a variety $V$ to $\mathrm{pre}\,V$
described above is equivalent to the procedure 
of {\em splitting} described in \cite{BaiGuoNi-2013}
in terms of Manin products for operads. 
The equivalence of these two approaches 
\cite{GubKol-2014}
was proved by means of the notion of a Rota--Baxter operator. This notion is also essential 
for our study.

\begin{definition} [see, e.g., \cite{Baxter}] \label{defn1.1}
A linear operator $R$ defined on an algebra $A$ over a field $\Bbbk$ is called a Rota-Baxter
operator (RB-operator) of  weight zero if it satisfies the relation
\[
R(x)R(y) = R(R(x)y + xR(y)),\quad  x, y \in A.
\]
An algebra A with a Rota-Baxter operator is called a Rota--Baxter algebra (RB-algebra).
\end{definition}

A Rota--Baxter operator is a formalization of 
integration. For example, 
if $A$ is an arbitrary algebra over a field 
of characteristic zero, and $A[[t]]$ 
is the algebra of formal power series over $A$
then the linear map
\[
\begin{aligned}
R: {}&  A[[t]]\to A[[t]],  \\
& \sum\limits_{n\ge 0} a_nt^n \mapsto 
  \sum\limits_{n\ge 0} \dfrac{a_n}{n+1}t^{n+1},
\end{aligned}
\]
is a Rota--Baxter operator. 
Similarly, if we restrict to the 
subalgebra $tA[[t]]$ of all series without 
free term then 
\[
R: \sum\limits_{n\ge 1} a_nt^n \mapsto 
  \sum\limits_{n\ge 1} \dfrac{a_n}{n}t^{n+1}
\]
is also a Rota--Baxter operator.

\begin{proposition}
[\cite{BaiGuoNi-2013}]\label{prop:RB-dendr}
Let $A$ be an algebra from a variety $V$
equipped with a Rota--Baxter operator 
$R: A\to A$. 
Then 
the same space $A$ with new operations 
\[
a\succ b = R(a)b, \quad a\prec b = aR(b),
\]
for $a,b\in A$, is a $\mathrm{pre}\,V$-algebra 
denoted $A_R$.
\end{proposition}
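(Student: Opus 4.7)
The plan is to verify the pre-$V$ property directly from the Perm-tensor definition given earlier in the paper. Fix an arbitrary $\mathrm{Perm}$-algebra $P$, and set $B := P \otimes A$ with the multiplication
\[
(p \otimes a)(q \otimes b) := pq \otimes R(a) b + qp \otimes a R(b), \qquad p,q \in P,\ a,b \in A.
\]
One needs to show $B \in V$, i.e.\ that every multilinear identity of $V$ vanishes identically on $B$. Let $f(x_1,\ldots,x_n)$ be such an identity. I would substitute $x_i = p_i \otimes a_i$ and expand using the rule above; each monomial of $f$ becomes a sum of $2^{n-1}$ terms of the form $\mu_\sigma(p_1,\ldots,p_n) \otimes \nu_\tau(a_1,\ldots,a_n)$, where $\mu_\sigma$ is a $\mathrm{Perm}$-monomial and $\nu_\tau$ is a monomial in $A$ with subexpressions wrapped in $R$. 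The choice at each internal node of the parse tree (which factor is ``active'') determines simultaneously the bracketing/permutation of the $p_i$'s and the $R$-decoration pattern on the $a_i$'s.

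The heart of the argument is a simultaneous reduction on the two tensor factors. On the $A$-side, the Rota--Baxter relation $R(u)R(v) = R(R(u)v) + R(u R(v))$ is applied recursively to push every occurrence of $R$ toward the leaves, rewriting each $\nu_\tau$ as a linear combination of monomials in which $R$ decorates only individual variables $a_i$. On the $P$-side, the defining identities $(pq)r = p(qr)$ and $p(qr) = q(pr)$ of $\mathrm{Perm}$ identify many of the $\mu_\sigma$-factors and collapse the sum further. After these reductions the whole expression regroups as a linear combination of tensors of the form $\mu(p_1,\ldots,p_n) \otimes f(b_1,\ldots,b_n)$, where each $b_i$ is either $a_i$ or $R(a_i)$. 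Since $f$ is an identity of the variety $V$ and $A \in V$ contains all such $b_i$, each substitution $f(b_1,\ldots,b_n)$ vanishes, and the sum is zero.

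The main obstacle is the combinatorial bookkeeping: one must check that the Rota--Baxter simplifications on the $A$-side are aligned with the $\mathrm{Perm}$-identifications on the $P$-side so that the surviving terms actually regroup as $f$-substitutions. Doing this by hand for a generic $f$ is tedious, but it is handled systematically by the operadic splitting construction via Manin products developed in \cite{BaiGuoNi-2013} and \cite{GubKol-2014}. From that viewpoint, the $\mathrm{Perm}$-tensor criterion used here is precisely the defining characterization of $\mathrm{pre}\,V$-algebras, and the assignment $A \mapsto A_R$ is the object part of a canonical functor from Rota--Baxter $V$-algebras to $\mathrm{pre}\,V$-algebras induced by a morphism of operads, which makes the verification uniform across all identities of $V$.
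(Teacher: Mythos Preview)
The paper does not supply its own proof of this proposition: it is quoted as a known result from \cite{BaiGuoNi-2013} (with the equivalence to the Perm-tensor definition attributed to \cite{GubKol-2014}), and no argument is given in the text. So there is nothing to compare your attempt against on the paper's side.

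As for your sketch itself, the strategy via the Perm-tensor criterion is the right one, and the outcome you describe is correct. One refinement worth noting: the regrouping is sharper than ``each $b_i$ is either $a_i$ or $R(a_i)$.'' In a Perm-algebra every product of $p_1,\dots,p_n$ depends only on which index is rightmost, so the Perm-side collapses to $n$ distinct values indexed by $j\in\{1,\dots,n\}$. Correspondingly, the Rota--Baxter rewriting on the $A$-side yields, for each $j$, exactly $f(R(a_1),\dots,R(a_{j-1}),a_j,R(a_{j+1}),\dots,R(a_n))$: the distinguished (un-wrapped) argument is the one matching the rightmost Perm position. One sees this already in the associative case, where the expansion produces $f(R(a_1),R(a_2),a_3)$, $f(R(a_1),a_2,R(a_3))$, $f(a_1,R(a_2),R(a_3))$ tensored with the three Perm normal forms. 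Your final paragraph correctly identifies that turning this observation into a uniform argument for arbitrary multilinear $f$ is precisely the content of the operadic machinery in the cited references, so deferring to them is appropriate here.
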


\begin{example}\label{exmp:preLie}
Let $V=\mathrm{Lie}$ be the class of Lie algebras. 
Then for every $(A,\prec, \succ)\in 
\mathrm{pre\,Lie}$ the skew-symmetry of Lie algebras 
implies $a\succ b = -b\prec a$
for all $a,b\in A$. Hence, one operation is enough 
to describe the structure of a pre-Lie algebra. 
It follows from the Jacobi identity that 
the operation $\succ $ satisfies the identity
\[
(x_1\succ x_2)\succ x_3 - (x_2\succ x_1)\succ x_3 
- x_1\succ (x_2\succ x_3) + x_2\succ (x_1\succ x_3)
\]
i.e., is left-symmetric, and the operation $\prec $
satisfies the opposite right-symmetric identity.
\end{example}

\begin{example}\label{exmp:Zinbiel}
Let $V=\mathrm{Com}$ be the class of associative 
and commutative algebras. 
Then for every $(Z,\prec, \succ)\in 
\mathrm{pre\,Com}$ the commutativity  
implies $a\succ b = b\prec a$
for all $a,b\in Z$.
Again, one operation is enough 
to describe the structure of a pre-commutative algebra. 
It follows from the associativity 
of $P\otimes Z$, $P\in \mathrm {Perm}$
that 
the operation $\succ $ satisfies the identity
\eqref{eq:ZinbIdent}:
\[
(x_1\succ x_2)\succ x_3 + (x_2\succ x_1)\succ x_3 
- x_1\succ (x_2\succ x_3),
\]
and the operation $\prec $ satisfies the opposite one.
\end{example}

\begin{definition}[\cite{Loday2001}]\label{defn:Zinbiel}
An algebra $Z$ with one binary operation 
is said to be a Zinbiel algebra 
if 
\[
a(bc) = (ab)c+(ba)c.
\]
for all $a,b,c\in Z$.
\end{definition}

Hence, a Zinbiel algebra is the same as 
a pre-commutative algebra in terms of the operation~$\succ $.
Similarly, the class of all pre-associative algebras 
coincides with the variety of dendriform algebras 
defined in \cite{Loday2001}.

As it was mentioned above, every pre-associative algebra $Z$ turns into an associative and commutative 
algebra $Z^{(+)}$ with respect to anti-commutator. 

\begin{example}\label{exmp:FreeZinbiel}
Let $X$ be a nonempty set, $X^*$ be the set of all (associative) words in the alphabet $X$ (excluding the empty word), and let $F=\Bbbk X^{*}$ be the formal linear span of $X^*$ (this is the semigroup algebra of the free semigroup generated by $X$).
Define a product on $F$ as follows:
\[
(x_1\dots x_n)(y_1\dots y_{m+1}) = 
\sum\limits_{\sigma \in S_{n,m}} \sigma (x_1\dots x_ny_1\dots y_m) y_{m+1}, 
\quad x_i,y_j\in X,
\]
where $S_{n,m}\subset S_{n+m}$ is the set of all 
$(n,m)$-shuffle permutations from the symmetric group 
$S_{n+m}$, and $\sigma (u)$, $u\in X^{n+m}$,
stands for the word obtained by corresponding permutation of letters.

Then $F$ is a pre-commutative algebra, the corresponding $F^{(+)}$ is the well-known 
{\em shuffle  algebra}
structure on the tensor algebra of the space 
$\Bbbk X$.
\end{example}

The algebra from Example \ref{exmp:FreeZinbiel}
is the free pre-commutative algebra 
generated by a set $X$ \cite{Loday2001}, its basis 
consists of right-normed monomials
\begin{equation}\label{eq:Right-normed}
( \dots ((x_1x_2)x_3) \dots x_{n})x_{n+1},
\quad x_i\in X, \ i\ge 0.
\end{equation}

The purpose of this paper was to determine if 
the nilpotence of a commutative algebra $A$
is sufficient for $A$ to be embeddable into 
an appropriate pre-commutative algebra.
As a result, we obtain a more general sufficient 
condition, but start with the trivial case when $A$
has zero multiplication. In this case, it is possible 
to compute an analogue of the PBW-basis of $U_{(+)}(A)$ by means of the Gr\"obner--Shirshov bases technique for non-associative algebras.

\section{Composition--Diamond Lemma for 
non-associative algebras}	

The Gr\"obner--Shirshov bases method for nonassociative algebras goes back 
to the paper by A. Kurosh \cite{Kurosh}, 
it is closely related with the general 
Knuth--Bendix algorithm.
An essential advance in this technique 
for Lie algebras was obtained by A. Shirshov 
\cite{shir}, for associative and commutative 
algebras the Gr\"obner bases technique 
is widely used after \cite{Buchb}.

In this section, we recall the basics of 
the Gr\"obner--Shirshov bases method for 
non-associative algebras according to \cite[Section~5]{BokChen_BMS}.

Let $\Bbbk$ be a field, $X$ be a nonempty set,
and let  $M\langle X\rangle$ stand for 
the free non-associative algebra generated by~$X$.
Suppose the set $X$ is equipped with a well order
$\leq$,
and let $X^{**}$ 
denote the set of all non-associative words in the alphabet $X$ (excluding the empty word).
The set $X^{**}$ 
is a linear basis of $M\langle X\rangle$, 
it  inherits the order  $\leq $ on $X$ in a way described below.

For any $u \in X^{**}$, denote by $|u|$ 
the length of~$u$. 
Define the {\em weight} $\mathrm{wt}\,(u)$ 
of $u\in X^{**}$ as follows: 
for $u=x\in X$ put $\mathrm{wt}\,(u)=(1,x)\in \mathbb Z_+\times X$,
for $u=(u_1 u_2)$, put 
$\mathrm{wt}\,(u)=(|u|,u_2,u_1)\in \mathbb Z_+\times X^{**}\times X^{**}$. 
Extend the initial order $\leq $ on $X$ 
to the order on $X^{**}$ by induction on the length: 
\begin{equation}\label{eq:Order-weight}
u\le v \ \Longleftrightarrow \mathrm{wt}\,(u)
\le \mathrm{wt}\,(v)
\end{equation} 
lexicographically.
That is, if $|u|<|v|$ then $u<v$, 
if $|u|=|v|=1$ then this is just the order on $X$, 
if $|u|=|v|=l>1$ then we present both $u=(u_1u_2)$, 
$v=(v_1v_2)$, where $|u_i|,|v_i|<l$, 
and then compare the factors, for which the order 
is already defined by induction.
This is a monomail order, i.e.,
\[ 
u\le v \Rightarrow wu\le wv,\ uw\le vw ,
\]
for all $u,v,w\in X^{**}$.

Every $0\ne f\in M\langle X\rangle$ may be presented as $f=\sum^n_{i=1}\alpha_i u_i$, 
where each $\alpha_i \in \Bbbk$, $\alpha_i \ne 0$, $u_i\in X^{**}$,
and $u_1>u_2>\dots>u_n$. 
The leading monomial $\bar f$ of $f\ne 0$ is then
$u_1$. 
If $\alpha_1=1$, then $f$ is said to be 
a monic polynomial.
 
\begin{definition}
Let $f,g \in M\langle X\rangle$ be monic polynomials. 
Assume there exists a word $u\in (X\cup \{\star\})^{**}$
(where $\star $ is a formal new letter not in $X$)
such that $w=\bar f$ is obtained from $u$
by replacing $\star $ with $\bar g$, 
i.e., $\bar f = u|_{\star=\bar g}$.
Then the polynomial
$(f,g)_u=f-u|_{\star =g}$ 
is called a {\em composition of inclusion}
of $f$ and $g$ with respect to~$w$.
The word $w$ as above is called an ambiguity.
\end{definition}

Let $S\subseteq M\langle X\rangle$ 
be a nonempty set of monic polynomials relative
to a monomial order $\leq $ 
on $X^{**}$. 
A polynomial $h\in M\langle X\rangle $
is said to be {\em trivial modulo $(S,w)$}, 
where $w\in X^{**}$ is a fixed word, 
if there exist a finite number of 
$u_i \in (X\cup\{\star \})^{**}$
such that 
\[
h = \sum_i  \alpha_i u_i|_{\star =s_i}, \quad s_i\in S,\ \alpha_i\in \Bbbk ,
\]
where $u_i|_{\star =\bar s_i} < w$ for all $i$.
We denote this property of $h$ as
\[
h\equiv 0 \pmod {S,w}.
\]

\begin{definition}\cite{BokChen_BMS,Bok72,shir}
A set $S$ of monic polynomials from $M\langle X\rangle $ 
is called a {\em Gr\"obner--Shirshov basis} (GSB)
if for every $f,g\in S$ we have 
$(f,g)_u \equiv 0 \pmod {(S,\bar f)}$
provided that such a composition exists.
In other words, all compositions of elements 
from $S$ are {\em trivial}.
\end{definition}

\begin{theorem}[Composition--Diamond Lemma for non-associative algebras, \cite{BokChen_BMS}]
Let $X^{**}$ be be equipped with a well monomial order $\leq $. 
For a set $S\subseteq M\langle X\rangle$  
of monic polynomials, 
the following statements are equivalent.

(i) $S$ is a Gr\"obner--Shirshov basis in $M\langle X\rangle$.

(ii) If $f\ne 0$, belongs to the ideal $I(S)$ of 
$M\langle X\rangle $ generated by $S$ then  
$\bar f =u|_{\star=\bar s}$ 
for some $s\in S$, $u\in (X\cap \{\star\})^{**}$.

(iii) The set 
\[
\mathrm{Irr}\,(S)=\{a\in X^{**}| a\ne u|_{\star=\bar s},\text{ for neither } s\in S, 
u\in (X\cap \{\star\})^{**}\}
\]
is a linear basis of the algebra 
$M\langle X\mid  S\rangle:= 
M\langle X\rangle/I(S)$.
\end{theorem}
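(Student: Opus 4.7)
The plan is to prove the three implications (i) $\Rightarrow$ (ii) $\Rightarrow$ (iii) $\Rightarrow$ (i), using the well-ordering of $X^{**}$ as the induction hammer throughout. A running observation I would establish first is that because words in $X^{**}$ are non-associative, a subword occurrence is rigid: two occurrences $u|_{\star = \bar s}$ and $u'|_{\star = \bar{s'}}$ producing the same monomial $w$ can only arise from one occurrence lying entirely inside the other, so the only compositions to worry about are compositions of inclusion. This rigidity is what makes the non-associative Composition--Diamond Lemma cleaner than its associative counterpart.

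For (i) $\Rightarrow$ (ii), take any nonzero $f \in I(S)$ and write $f = \sum_i \alpha_i u_i|_{\star = s_i}$ with $s_i \in S$, $u_i \in (X\cup\{\star\})^{**}$. Set $w = \max_i u_i|_{\star = \bar{s_i}}$. If $w$ is achieved by a single index then $\bar f = w$ and we are done. Otherwise two summands with leading monomial $w$ collide; by the rigidity observation above, $w$ contains one copy of $\bar s_j$ inside the other, which is precisely the ambiguity realising a composition $(s_i,s_j)_w$. The GSB hypothesis says this composition reduces to $0$ modulo $(S,w)$, so we can replace the colliding pair by an expression whose monomials $u|_{\star=\bar s}$ are all strictly below $w$. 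Iterating (with well-ordering guaranteeing termination) shrinks the multiset of leading monomials lexicographically until a unique top term survives.

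For (ii) $\Rightarrow$ (iii), spanning is the standard reduction: given any $f \in M\langle X\rangle$, if $\bar f = u|_{\star = \bar s}$ for some $s \in S$, subtract an appropriate scalar multiple of $u|_{\star = s}$ to lower $\bar f$; the well-ordering forces this loop to terminate in $\mathrm{span}\,\mathrm{Irr}(S) + I(S)$. Linear independence is immediate from (ii): a nontrivial relation $\sum \alpha_i a_i \in I(S)$ with $a_i \in \mathrm{Irr}(S)$ would have its leading monomial $a_{i_0}$ equal to some $u|_{\star = \bar s}$, contradicting $a_{i_0} \in \mathrm{Irr}(S)$.

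For (iii) $\Rightarrow$ (i), pick any composition $(f,g)_u$ of $f,g \in S$ with ambiguity $w$. By construction $(f,g)_u \in I(S)$ and every monomial of $(f,g)_u$ is strictly below $w$. Apply the reduction procedure from the previous paragraph: each reduction step subtracts a term of the shape $\alpha\, v|_{\star = s}$, $s \in S$, with $v|_{\star = \bar s} < w$, and lowers the leading monomial. Termination produces an element of $\mathrm{span}\,\mathrm{Irr}(S) \cap I(S) = 0$ by (iii). Tracking the subtracted terms exhibits $(f,g)_u$ as a sum $\sum_k \beta_k v_k|_{\star = t_k}$ with $t_k \in S$ and $v_k|_{\star = \bar{t_k}} < w$, which is exactly the statement $(f,g)_u \equiv 0 \pmod{S,w}$.

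The main obstacle is the collision step inside (i) $\Rightarrow$ (ii): one has to convert a coincidence of two leading terms into an honest composition of inclusion and then verify that substituting the trivial expression for that composition really does reduce the multiset of top monomials under the well-order. Getting the bookkeeping right here — in particular, noting that the context $u$ around an ambiguity preserves the property ``all monomials below $w$'' under the monomial order — is where the technical care sits; the other two implications are largely formal consequences of well-ordered reduction.
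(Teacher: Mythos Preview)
The paper does not actually prove this theorem; it is quoted from \cite{BokChen_BMS} and used as a black box, so there is no ``paper's own proof'' to compare against. That said, your sketch follows the standard route and is mostly sound, with one genuine gap.

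In (i)$\Rightarrow$(ii) your ``rigidity'' claim is too strong. For non-associative words (binary trees), two subword occurrences of $\bar s_i$ and $\bar s_j$ inside the same $w$ are either nested \emph{or disjoint}; they cannot partially overlap, but disjointness certainly occurs (e.g.\ $\bar s_i=ab$ and $\bar s_j=cd$ inside $(ab)(cd)$). When the two occurrences are disjoint there is no composition of inclusion at all, so your reduction step as written does not apply. The fix is the usual add--and--subtract trick: if $w=V|_{\star_1=\bar s_i,\,\star_2=\bar s_j}$ with disjoint slots, then
\[
u_i|_{\star=s_i}-u_j|_{\star=s_j}
= V|_{\star_1=s_i,\,\star_2=\bar s_j-s_j}\;-\;V|_{\star_1=\bar s_i-s_i,\,\star_2=s_j},
\]
and each summand on the right is a $\Bbbk$-combination of terms of the form $(\text{context})|_{\star=s}$ with leading monomial $<w$, because $\bar s-s$ has only monomials below $\bar s$ and the order is monomial. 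This handles the disjoint collision without invoking any composition; only the nested collision actually uses the GSB hypothesis. Once you insert this case distinction, your descent on the multiset of top monomials goes through.

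A smaller imprecision sits in (iii)$\Rightarrow$(i): the reduction you borrow from the previous paragraph only lowers $\bar f$ when $\bar f$ happens to be reducible, but the leading monomial of $(f,g)_u$ may well lie in $\mathrm{Irr}(S)$. You should instead reduce \emph{any} reducible monomial at each step; all monomials stay $<w$ throughout, termination follows from the well-order on finite multisets, and the limit lies in $\mathrm{span}\,\mathrm{Irr}(S)\cap I(S)=0$, which is what you want.
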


If a subset $S$ of $M\langle X\rangle$ is not a Gr\"obner--Shirshov basis, then we can add to $S$ all nontrivial compositions of polynomials from $S$, and by continuing this process (maybe infinitely) many times, we eventually obtain
a Gr\"obner--Shirshov basis $S^{\mathrm{comp}}$.
Such a process is called the Shirshov algorithm. 

Let $X$ be a nonempty set equipped 
with a well order $\le$. 
Let us extend this order 
to a monomial order on $X^{**}$ 
as described by \eqref{eq:Order-weight}.
Then the free pre-commutative algebra $F=F(X)$ 
is defined by the following family of relations:
\begin{equation}\label{eq:Zinb_relations}
a(bc)-(ab)c-(ba)c;\ a,b,c\in X^{**}.
\end{equation}
The leading monomial is $a(bc)$ since $|bc|>|c|$.

\begin{theorem}\label{Thm:2}
The set of all polynomials \eqref{eq:Zinb_relations}
is a Gr\"obner--Shirshov basis. 
\end{theorem}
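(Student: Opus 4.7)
My plan is to verify Theorem \ref{Thm:2} by checking, in accordance with the Composition--Diamond Lemma, that every composition of inclusion of two relations from \eqref{eq:Zinb_relations} is trivial modulo $S$ and the ambient leading word. Let
\[
f = a(bc) - (ab)c - (ba)c, \qquad g = a'(b'c') - (a'b')c' - (b'a')c'
\]
be two Zinbiel relations with leading monomials $\bar f = a(bc)$ and $\bar g = a'(b'c')$. A composition arises exactly when $\bar g$ occurs as a subtree of $\bar f$, and there are three configurations: \textup{(A)} $\bar g$ sits as a (possibly improper) subtree of one of the components $a$, $b$, or $c$; \textup{(B)} $\bar g$ coincides with the right child $bc$ of $\bar f$, forcing $a' = b$ and $b'c' = c$; and \textup{(C)} $\bar g = \bar f$, which yields the zero composition.

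Case (A) will be handled uniformly by exploiting that \eqref{eq:Zinb_relations} is indexed by \emph{all} triples of non-associative words. If $a = u_0|_{\star = \bar g}$ for a context $u_0$ (the subcases where $\bar g$ lives inside $b$ or $c$ are analogous), then substituting $g = \bar g - (a'b')c' - (b'a')c'$ and multiplying by $(bc)$ on the right turns the composition into
\[
-(ab)c - (ba)c + a_1(bc) + a_2(bc),
\]
with $a_1, a_2$ obtained from $a$ by the replacement and both strictly less than $a$ in the weight order. Each new term is itself the leading monomial of a Zinbiel relation of index strictly below $\bar f$, so rewriting and reassembling expresses the composition as a sum $\sum v_i|_{\star = s_i}$ with every leading monomial $v_i|_{\star = \bar s_i}$ strictly less than $\bar f$ by monotonicity of \eqref{eq:Order-weight}, yielding triviality modulo $(S, \bar f)$.

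Case (B) is the real content. Setting $c = b'c'$ and $w = \bar f = a(b(b'c'))$, a direct computation gives
\[
(f,g)_w = a\bigl((bb')c'\bigr) + a\bigl((b'b)c'\bigr) - (ab)(b'c') - (ba)(b'c'),
\]
and each summand is a leading monomial of a Zinbiel relation of weight strictly below $w$ (each has right child of length $|b'c'| < |b(b'c')|$). A first round of Zinbiel rewriting replaces each of the four terms by two terms of shape $(\cdot)\, c'$, producing eight summands; among the eight left factors, four are still of shape $x(yz)$ and a second rewrite breaks each of these into two products whose right child has length one. The resulting twelve monomials all share the common trailing factor $c'$ and pair off into six opposite-sign couples that cancel identically. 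The main technical obstacle is executing this two-step reduction while verifying that every intermediate leading monomial stays strictly below $w$; fortunately the weight order \eqref{eq:Order-weight} prioritises the right subtree, so ``shorter right child'' always implies ``smaller word'' and the required monomial comparisons are routine.

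As an independent sanity check, the irreducibles $\mathrm{Irr}(S)$ are immediately seen to be precisely the right-normed monomials \eqref{eq:Right-normed}, which by Example \ref{exmp:FreeZinbiel} form a linear basis of the free Zinbiel algebra $F(X) = M\langle X \mid S\rangle$; the equivalence (iii)$\Rightarrow$(i) of the Composition--Diamond Lemma then gives the theorem on its own and confirms a posteriori that the cancellations in case (B) must work out.
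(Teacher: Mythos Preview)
The paper does not supply a proof of Theorem~\ref{Thm:2}; it is stated without argument, immediately after the description of Loday's basis of right-normed words and immediately before the sentence deducing that basis from the Composition--Diamond Lemma.  In other words, the paper is content to leave the result as either a known fact or an exercise, and the only ``proof'' implicit in the text is exactly your closing sanity check: since Example~\ref{exmp:FreeZinbiel} (cited from \cite{Loday2001}) already exhibits the right-normed words \eqref{eq:Right-normed} as a linear basis of $M\langle X\mid S\rangle$, implication (iii)$\Rightarrow$(i) of the Composition--Diamond Lemma gives the theorem at once.  That one-line argument is the route the paper tacitly takes; your direct verification of compositions is additional.

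Your direct computation is essentially correct.  Case~(B) is carried out in full and the twelve terms do cancel pairwise as you describe.  Two small phrasings deserve tightening.  In Case~(B) you say each of the four initial summands ``has right child of length $|b'c'|$''; this holds for $(ab)(b'c')$ and $(ba)(b'c')$, while $a((bb')c')$ and $a((b'b)c')$ have right child of the \emph{same} length $|b(b'c')|$, and one must descend one more level (comparing $c'$ against $b'c'$) to see they lie below~$w$.  In Case~(A) the assertion that ``each new term is itself the leading monomial of a Zinbiel relation'' applies only to $a_1(bc)$ and $a_2(bc)$; the remaining pair $-(ab)c-(ba)c$ is not of that shape, but after reducing by $f_{a_1bc}+f_{a_2bc}$ the residual is
\[
\big((a_1+a_2-a)b\big)c + \big(b(a_1+a_2-a)\big)c
= -\big((u_0|_{\star=g})b\big)c - \big(b(u_0|_{\star=g})\big)c,
\]
which is already of the form $\sum v_i|_{\star=g}$ with $v_i|_{\star=\bar g}<w$.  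With these adjustments your argument is complete and self-contained, whereas the paper relies on the external reference \cite{Loday2001} via (iii)$\Rightarrow$(i).
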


Hence, the set $\mathrm{Irr}\,(S)$
which consists of all right-normed monomials 
\eqref{eq:Right-normed}
is indeed a linear basis of the free pre-commutative algebra 
$F=M \langle X \mid S\rangle$.

\section{Universal pre-commutative envelopes
of commutative algebras}

Let $A$ be an associative and commutative algebra.
Denote by $*$ the multiplication in $A$.
If $A$ contains a non-zero idempotent $e=e*e$
then $A$ cannot be embedded into an algebra 
of the form $Z^{(+)}$, where $Z$ is a pre-commutative (Zinbiel) algebra. 
Indeed, if $\varphi: A\to Z^{(+)}$ 
is such an embedding and $x =\varphi(e)$, 
then $x = \varphi(e) = \varphi(e*e) =2xx$.
The identity \eqref{eq:ZinbIdent}
implies
\[
x(xx)=2(xx)x = xx,
\]
so $xx=2xx$ and $x=0$, a contradiction.  

Hence, the universal pre-commutative envelope 
$U=U_{(+)}(A)$ of a commutative algebra $A$
does not necessalily contains $A$ as a subalgebra of 
$U^{(+)}$. 

Let us consider the simplest case when $A$
is an algebra with trivial (zero) multiplication.
Even in this case, finding the structure of 
$U_{(+)}(A)$ requires certain computations.
We will find here an analogue 
of the  Poincar\'e--Birkhoff--Witt 
basis for the pre-commutative envelope of an
algebra $A$ such that $A*A=0$ by means 
of the  GSB method.

\begin{theorem}\label{Thm:1} 
Let $X$ be a basis of an algebra $A$ 
such that $A^2=0$, and let 
$\leq$ be a well order on $X$.
Then the following polynomials form a GSB of the universal enveloping pre-commutative 
algebra $U_{(+)}(A)$:
\begin{enumerate}
  \item[(R1)] $f_{abc} = a(bc)-(ab)c-(ba)c$, $a,b,c\in X^{**}$;
  \item[(R2)] $g_{xy}=xy+yx$, $x,y\in X$, $x<y$;
  \item[(R2$'$)] $u_x =xx$, $x\in X$;
  \item[(R3)] $t_{axy} = (ax)y+(ay)x$, $x,y\in X$, $a\in X^{**}$, $x<y$, the length of $a$ is even;
  \item[(R3$'$)] $t_{axx} = (ax)x$, $x\in X$, 
   $a\in X^{**}$, the length of $a$ is even.
\end{enumerate}
\end{theorem}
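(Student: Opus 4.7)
The approach is to apply the Composition--Diamond Lemma of Section~3 to the set $S$ consisting of the polynomials (R1), (R2), (R2$'$), (R3), (R3$'$). Two things must be verified: (a) $S$ is contained in the defining ideal of $U_{(+)}(A)$, so that $M\langle X \mid S\rangle$ is a quotient of $U_{(+)}(A)$; and (b) every composition of inclusion between a pair of elements of $S$ is trivial modulo $(S,w)$.

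For (a), (R1) is the Zinbiel axiom, and (R2), (R2$'$) translate the defining relations $\alpha\beta+\beta\alpha = \alpha*\beta = 0$ of $U_{(+)}(A)$ in the basis $X$. For (R3), (R3$'$) I would argue by induction on $|a|$, starting from the Zinbiel identity
\[
a(xy+yx) = (ax)y+(xa)y+(ay)x+(ya)x,
\]
obtained from two applications of (R1). The left-hand side vanishes in $U_{(+)}(A)$, and an inductive analysis on the shape of $a$, together with the parity of $|a|$, separates the summand $(ax)y+(ay)x$ from $(xa)y+(ya)x$ and shows the former vanishes when $|a|$ is even (and similarly for the $x=y$ diagonal case).

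For (b), I would enumerate the leading monomials $a(bc)$, $xy$ ($x<y$), $xx$, $(ax)y$ and $(ax)x$, then treat inclusion compositions case by case. Compositions between two polynomials of type (R1) are trivial by Theorem~\ref{Thm:2}. The remaining families are: (R1) with (R2)/(R2$'$), where $xy$ or $xx$ sits as a subword of $a(bc)$; (R1) with (R3)/(R3$'$), where a (R3)-type leading monomial nests inside $a(bc)$; (R3)/(R3$'$) with (R1), where a non-right-normed subword sits inside the word $a$ of $(ax)y$; and the mutual compositions among (R2)--(R3$'$). For each ambiguity $w$ the strategy is the same: unfold non-right-normed subwords by (R1), cancel right-normed monomials in pairs via (R2)--(R3$'$), and verify that the remainder is a combination $\sum_i \alpha_i u_i|_{\star=s_i}$ with every $u_i|_{\star=\bar s_i}$ strictly below $w$ in the order \eqref{eq:Order-weight}.

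The main obstacle is that $a$ in (R1), (R3), (R3$'$) ranges over all of $X^{**}$, producing infinitely many families of compositions with ambiguities of arbitrary length. I expect to close the argument uniformly by induction on $|w|$: after one round of reductions the residue becomes expressible through $S$-polynomials whose ambiguities are strictly shorter, which matches the inductive hypothesis. A secondary subtlety is the parity of $|a|$, which must be tracked carefully through each unfolding of (R1), since moving a letter in or out of $a$ changes $|a|$ by one; this is precisely the bookkeeping that forces (R3) together with (R3$'$), rather than either one alone, to appear in the Gr\"obner--Shirshov basis.
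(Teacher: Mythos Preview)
Your overall plan---verify that (R3), (R3$'$) lie in the defining ideal, then check all inclusion compositions---matches the paper's, and your treatment of part~(a) and of the (R1)--(R1) compositions via Theorem~\ref{Thm:2} is correct. The paper likewise reduces, as you anticipate, to compositions in which the words $a,b$ appearing in (R1), (R3), (R3$'$) are left-normed; after this reduction only four families remain, of the shape $(f_{axy},g_{xy})$, $(f_{axx},u_x)$, $(f_{a(bx)y},t_{bxy})$, $(f_{a(bx)x},t_{bxx})$, all with an (R1)-relation as the outer polynomial.

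The gap is in your handling of the last two families. Expanding $(f_{a(bx)y},t_{bxy})_u$ with $u=(a\star)$ leaves, after the obvious cancellations, the summands $((bx)a)y$ and $((by)a)x$, where $bx$, $by$ have odd length and $a$ is left-normed. Your proposed strategy---unfold by (R1) into left-normed monomials and then cancel in pairs via (R2)--(R3$'$)---does not close here: when $|a|$ is even the left-normed words produced from $(bx)a$ have \emph{odd} length $|b|+1+|a|$, so (R3)/(R3$'$) is not directly applicable at the outermost level, and the shuffle of $x$ (resp.\ $y$) through the letters of $b$ and $a$ does not line up with the corresponding shuffle in the companion term to form $t$-relations. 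Your proposed ``induction on $|w|$'' does not rescue this: knowing that compositions with strictly shorter ambiguity are trivial gives no mechanism for rewriting the residue of the \emph{current} composition as a trivial combination modulo $(S,w)$.

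What the paper actually does is isolate and prove a separate auxiliary result (Lemma~\ref{lem:Odd-Even-Zero}): if $c$ is left-normed of odd length and $d$ is left-normed of even length, then the product $cd$ is already trivial, i.e.\ $cd=\sum_i\alpha_i u_i|_{\star=s_i}$ with every $u_i|_{\star=\bar s_i}\le cd$. This lemma is established by its own double induction on $(|c|,|d|)$ and is the engine that kills the terms $((bx)a)y$, $((by)a)x$ (and, in the case $|a|$ odd, the analogous terms containing factors $xb$, $yb$, $ab$). Without this lemma, or an equivalent device, the composition check for the (R1)--(R3) family cannot be completed; you should expect to need it as an independent ingredient rather than hope it falls out of a uniform unfolding.
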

  
\begin{proof}
All compositions among the 
relations of type (R1) are trivial:
they were considered in Theorem~\ref{Thm:2}.
Hence, we may consider other relations from $S$
only with words of the form
\[
a= [z_1,z_2,\ldots,z_m]\in X^{**}, \quad z_i\in X,
\]
where $[\ldots ]$ denotes left-normed bracketing:
$[z_1,z_2,\ldots,z_m] = (((z_1z_2)z_3)\ldots)z_m$.

First, let us prove that (R3) and (R3$'$)
follow from the defining relations 
(R1), (R2), (R2$'$) of $U_{(+)}(A)$.
Suppose $a= [z_1,z_2,\ldots,z_m]$
as above, and $m$ is even. 
Proceed by induction on $m\ge 2$.
If $m=2$ then 
\begin{multline*}
ag_{xy} = (z_1z_2)(xy) + (z_1z_2)(yx) \\
=[z_1,z_2,x,y] + [z_1,x,z_2,y] + [x,z_1,z_2,y]
+ [z_1,z_2,y,x] + [z_1,y,z_2,x] + [y,z_1,z_2,x] \\
=(ax)y + (ay)x + (g_{z_1x}z_2)y + (g_{z_1y}z_2)x,
\end{multline*}
so $(ax)y + (ay)x$ follows from (R1), (R2), and (R2$'$).
Suppose
\[
ag_{xy} = [z_1,z_2,\ldots,z_m](xy) + 
[z_1,z_2,\ldots,z_m] (yx),
\]
and 
\begin{multline*}
[z_1,z_2,\ldots,z_m](xy) \equiv 
[x,z_1,z_2,\ldots,z_m,y] +
[z_1,x,z_2,\ldots,z_m,y] \\
+
\dots 
+ [z_1,z_2,\ldots,x,z_m,y]
+[z_1,z_2,\ldots,z_m,x,y]
\end{multline*}
modulo the relations (R1).
All terms except the last one 
form the pairs like
\[
[z_1,z_2,\ldots, z_{2l},x, z_{2l+1},\ldots , z_m,y]
+
[z_1,z_2,\ldots, z_{2l},z_{2l+1}, x, \ldots , z_m,y],
\quad l=1,\dots, (m-2)/2,
\]
each of them is a corollary of (R1), (R2), and (R2$'$) by induction. 
Hence, 
$a(xy)+a(yx)$ and $(ax)y+(ay)x$ both belong to 
the ideal generated by (R1), (R2), and (R2$'$).

Relations of the form (R3$'$) are proved similarly.

Now denote by $S$ the set of polynomials in the statement and prove that all their compositions 
are trivial.

The only potentially nontrivial compositions 
$(f,g)_u$ 
of inclusion are the following:
\begin{enumerate}

\item [(C1--2)]  
 $f=f_{axy}$, $g=g_{xy}$, $u = (a\star )$, 
$a\in X^{**}$ is left-normed;
\item [(C1--2$'$)]
$f=f_{axx}$, $g=u_{x}$, $u = (a\star )$, 
$a\in X^{**}$ is left-normed;
\item [(C1--3)]
$f = f_{a(bx)y}$, $g=t_{bxy}$, $u=(a\star )$, 
$a,b\in X^{**}$ are left-normed; 
\item [(C1--3$'$)]
$f = f_{a(bx)x}$, $g=t_{bxx}$, $u=(a\star )$, 
$a,b\in X^{**}$ are left-normed.

\end{enumerate}

Consider the composition (C1--2):
\begin{multline*}
(f,g)_u = f_{axy}-ag_{xy} 
=a(xy)-(ax)y-(xa)y-a(xy)-a(yx) \\
  = -(ax)y-(xa)y-(ay)x-(ya)x .\\
\end{multline*}
Here $a= [z_1,z_2,\ldots,z_m]$.

Assume $m$ is an even number. Then 
\begin{multline*}
f_{axy}-ag_{xy} = -(xa)y-(ya)x-t_{axy}
    = -(x[z_1,z_2,\ldots,z_m])y-(y[z_1,z_2,\ldots,z_m])x
     -t_{axy}\\
\equiv
-
\sum\limits_{\sigma\in S_{1,m-1}}
  \big (\sigma ([x,z_1,\ldots , z_{m-1},z_m])y
+
\sigma ([y,z_1,\ldots , z_{m-1},z_m])x \big )
\\
= -[g_{x,z_1},z_{2},\ldots,z_m,y]
    -\sum_{l=1}^{\frac{m-2}{2}}
    [z_1,\ldots, z_{2l},x,z_{2l+1},\ldots,z_m,y]-[z_{1},\ldots,z_{2l},z_{2l+1},x,z_{2l+2},\ldots,z_m,y]
    \\
 -[g_{yz_1},z_{2},\ldots,z_m,y]
 -\sum_{l=1}^{\frac{m-2}{2}}
    [z_1,\ldots, z_{2l},y,z_{2l+1},\ldots,z_m,x]-[z_1,\ldots, z_{2l}, z_{2l+1},y,z_{2l+2},\ldots,z_m,x] \\
\equiv 
 -\sum_{l=1}^{\frac{m-2}{2}}
 [((a_{2l}x)z_{2l+1}+(a_{2l}z_{2l+1})x),z_{2l+2},\ldots,z_m,y]
  -\sum_{l=1}^{\frac{m-2}{2}}
  [((a_{2l}y)z_{2l+1}+(a_{2l}z_{2l+1})y),z_{2l+2},\ldots,z_m,x] \\
 = 
 -\sum_{l=1}^{\frac{m-2}{2}}
 [t_{a_{2l},x,z_{2l+1}},z_{2l+2},\ldots,z_m,y]
  -\sum_{l=1}^{\frac{m-2}{2}}
  [t_{a_{2l},y,z_{2l+1}},z_{2l+2},\ldots,z_m,x]
\equiv 0 \pmod {S, a(xy)}.
\end{multline*} 
Here $a_{2l} = [z_1,\ldots , z_{2l}]$ is a left-normed word of even length.

Now assume $m$ is an odd number. Then, similarly,
rewrite the composition into left-normed form 
to obtain 
\begin{multline*}
f_{axy}-ag_{xy}
 = -[z_1,z_2,\ldots,z_m,x,y]
   -[z_1,z_2,\ldots,z_m,y,x] 
   -(x[z_1,z_2,\ldots,z_m])y
   -(y[z_1z_2,\ldots,z_m])x
\\
\equiv
    -[z_1,z_2,\ldots,z_m,x,y]
    -[x,z_1,\ldots , z_m,y]
    -\sum_{l=1}^{m-1}[a_{l},x,z_{l+1},\ldots,z_m,y] \\
    -[z_1,z_2,\ldots,z_m,y,x]
    -[y,z_1,\ldots, z_m,x]
    -\sum_{l=1}^{m-1}[a_{l},y,z_{l+1},\ldots,z_m,x]
\\
\equiv
    -\sum_{l=2}^{m}[a_{l},x,z_{l+1},\ldots,z_m,y] 
    -\sum_{l=2}^{m}[a_{l},y,z_{l+1},\ldots,z_m,x]
\\
    = -\sum_{i=1}^{\frac{m-1}{2}}[a_{2i},x,z_{2i+1},\ldots,z_m,y]-[a_{2i},z_{2i+1},x,z_{2i+2},\ldots,z_m,y]
    \\
     -\sum_{i=1}^{\frac{m-1}{2}}[a_{2i},y,z_{2i+1},\ldots,z_m,x]-[a_{2i},z_{2i+1},y,z_{2l+2},\ldots,z_m,x]
\\
     =-\sum_{i=1}^{\frac{m-1}{2}} 
     \big (
     [t_{a_{2i}xz_{2i+1}},z_{2i+2},\ldots, z_m,y]+[t_{a_{2i}yz_{2i+1}},z_{2i+2},\ldots, z_m,x] \big )
   \equiv 0 \pmod {S, a(xy)}.
\end{multline*}
Here, as above, 
$a_l = [z_1,\ldots, z_l]$.

Therefore, all compositions of type (C1--2) are trivial. For (C1--2$'$), the same computations 
show triviality of such compositions.

To complete the proof, we need the following 

\begin{lemma}\label{lem:Odd-Even-Zero}
Let $a=[x_1,x_2,\ldots,x_m]$, 
$b=[y_1,y_2,\ldots,y_k]\in X^{**}$, $m\ge 1$ is odd and 
$k\ge 2$ is even.
Then there exist $u_i\in X^{**}$, 
$s_i\in S$, $\alpha_i\in \Bbbk $ such that
\[
ab = \sum_i  \alpha_i u_i|_{\star =s_i}, 
\]
where $u_i|_{\star =\bar s_i} \le ab$ for all $i$.
\end{lemma}

We will say $ab$ is trivial if such a presentation exists.

\begin{proof}
For $m=1$, $k=2$ we have 
\[
ab = x_1(y_1y_2) = f_{x_1y_1y_2} + g_{x_1y_1} y_2 .
\]
Assume $k>2$ and the statement is true for $m=1$ and for all words shorter than~$k$. Then present $b = [b_{k-2},y_{k-1},y_k]$
and write
\begin{multline*}
ab = x_1((b_{k-2}y_{k-1})y_k) \equiv 
 [x_1,b_{k-2},y_{k-1},y_k] + [b_{k-2},x_1,y_{k-1},y_k]+
[b_{k-2},y_{k-1},x_1,y_k] \\
=
[(x_1b_{k-2}),y_{k-1},y_k] +((b_{k-2}x_1)y_{k-1} - (b_{k-2}y_{k-1})x_1 ) y_k.
\end{multline*}
Hereinafter $\equiv $ means the reduction by means of 
the relations $f_{abc}$.
The first summand in the right-hand side 
is trivial by induction, the second one is equal to 
$t_{b_{k-2}x_1y_{k-1}} y_k$, so it is also trivial.

Next, assume $m>1$ and $k=2$. Then present 
$a = [a_{m-2},x_{m-1},x_m]$, $a_{m-2}$ is of odd length, and write 
\begin{multline*}
ab = [a_{m-2},x_{m-1},x_m](y_1y_2)\\
\equiv 
[(a_{m-2}x_{m-1}), x_m,y_1,y_2] 
+  [(a_{m-2}x_{m-1}), y_1, x_m, y_2] 
+  [y_1, (a_{m-2}x_{m-1}), x_m, y_2] \\
= t_{(a_{m-2}x_{m-1}),x_m,y_1} + [(y_1(a_{m-2}x_{m-1})),x_m,y_2]
\end{multline*}
The second summand is trivial by induction (the case $m=1$), hence, 
the entire expression is trivial.

Finally, assume $m>1$, $k>2$, and the lemma is true for 
all words $a$, $b$ such that either $a$ shorter than $m$ or for  $b$ shorter than $k$. 
Then present 
$a = a_{m-1}x_m$, where $a_{m-1}$ is of even length, 
$b = [y_1,\ldots, y_{k}]$,
calculate $(a_{m-1}x_m)[y_1,\ldots, y_k]$,
and re-arrange the summands to get
\begin{multline}\label{eq:SumX3}
ab = (a_{m-1} x_m)[y_1,\ldots, y_{k}]
\equiv 
\sum\limits_{j=1}^k \sum\limits_{i=j}^k
 [y_1,\ldots , y_{j-1}, a_{m-1}, y_j, \ldots, y_{i-1}, x_m, y_{i}, \ldots, y_{k}] 
 \\
 =
\sum\limits_{p=0}^{k/2-1}
\sum\limits_{i=2p+1}^{k}
 [y_1,\ldots, y_{2p}, a_{m-1},y_{2p+1}, \ldots, y_{i-1},x_m, 
 y_{i},\ldots, y_k] \\
 +
\sum\limits_{p=0}^{k/2-1}
\sum\limits_{i=2p+2}^{k}
 [y_1,\ldots, y_{2p}, y_{2p+1}, a_{m-1},y_{2p+2}, \ldots, y_{i-1},x_m, 
 y_{i},\ldots, y_k] .
\end{multline}
The first group of summands in the right-hand side of \eqref{eq:SumX3}
may be presented as
\begin{multline*}
\sum\limits_{p=0}^{k/2-1}
[u_{p,2p},x_m,y_{2p+1}, \ldots, y_k] 
+ [u_{p,2p},y_{2p+1},x_m,y_{2p+2}, \ldots, y_k]  \\
+
[u_{p,2p+2},x_m,y_{2p+3}, \ldots, y_k] 
+ [u_{p,2p+2},y_{2p+3},x_m,y_{2p+4}, \ldots, y_k] + \dots \\
+ \dots 
+
[u_{p,k-2}, x_m, y_{k-1}, y_k] 
+
[u_{p,k-2}, y_{k-1}, x_m y_k] \\
=
[t_{[u_{p,2p}x_my_{2p+1}}, y_{2p+2}, \ldots, y_k]
+
[t_{[u_{p,2p+2}x_my_{2p+3}}, y_{2p+4}, \ldots, y_k]
+ \dots + 
t_{[u_{p,k-2} x_m y_{k-1}}y_k \equiv 0 \pmod {S,ab},
\end{multline*}
where 
$u_{p,i} = [y_1,\ldots, y_{2p}, a_{m-1},y_{2p+1}, \ldots, y_{i}]$.
All summands in the second group contain factors of the form
\[
[y_1,\ldots, y_{2p+1}]a_{m-1}
\]
which are trivial by induction (the length of $a_{m-1}$ is even).
\end{proof}

Proceed to the compositions of type (C1--3). 
Consider $f_{a(bx)y}, t_{bxy} \in S$, 
the length of $b$ is even.
Then for $w=(a\star)$ we have 
\begin{multline*}
(f_{a(bx)y}, t_{bxy})_w  = f_{a(bx)y}-at_{bxy}
 = a((bx)y)-(a(bx))y-((bx)a)y-a((bx)y)-a((by)x) \\
     = -((ab)x)y -((ba)x)y-((bx)a)y
      -((ab)y)x -((ba)y)x-((by)a)x
\end{multline*}
Suppose $m$ is even.
Then both
$ab$, $ba$ are linear combinations of words which have even length. If $ab+ba = \sum\limits_{j\ge 0} \alpha_ju_j$, 
$\alpha_j\in \Bbbk $, then 
\[
 ((ab)x)y +((ba)x)y + ((ab)y)x +((ba)y)x \equiv 
 \sum\limits_{j\ge 0} \alpha_jt_{u_jxy} \equiv 0 
 \pmod {S, a((bx)y)}.
\]
The remaining terms in the composition are
$((by)a)x + ((bx)a)y$.
They both contain factors $(bx)a$ or $(by)a$
that are trivial by Lemma~\ref{lem:Odd-Even-Zero}.

Suppose $m$ is an odd number. Then, modulo Lemma~\ref{lem:Odd-Even-Zero},
the remaining terms of the composition are
\[
h = [b,a,x,y] + [b,x,a,y] + [b,a,y,x] + [b,y,a,x] .
\]
Let us rewrite $h$ as follows:
\begin{multline*}
  [b,a,x,y] + [b,x,a,y] + [x,b,a,y] +
  [b,a,y,x] + [b,y,a,x] + [y,b,a,x]
  - [x,b,a,y] - [y,b,a,x] \\
\equiv (ba)(xy) + (ba)(yx) -[(xb,a,y]-[(yb),a,x]
= (ba)u_{xy}  -[(xb,a,y]-[(yb),a,x] .
\end{multline*}
All summands are trivial by Lemma~\ref{lem:Odd-Even-Zero}, 
and all monomials here are smaller than $a((bx)y)$
since $b$ is a non-empty word.

In a similar way, the composition (C1--3$'$) is also trivial.
\end{proof}

\begin{corollary}
If $A$ is a linear space with an ordered basis $X$
then the set
\[
\{[x_1,x_2,x_3,x_4,\ldots, x_{n-1},x_n ] \mid
x_i\in X, \, n \ge 1,\, 
x_1>x_2,   x_3>x_4, \dots \}
\]
(with no restrictions on $x_n$ if $n$ is odd)
is a linear basis of the algebra $U_{(+)}(A)$
if $A$ is considered as an algebra with zero multiplication.  
\end{corollary}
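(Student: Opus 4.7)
The plan is to read off the set of irreducible monomials from the Gr\"obner--Shirshov basis $S$ provided by Theorem~\ref{Thm:1} and invoke part (iii) of the Composition--Diamond Lemma, which guarantees that $\mathrm{Irr}(S)$ is a linear basis of $M\langle X\rangle/I(S)=U_{(+)}(A)$. So the whole task reduces to identifying which non-associative monomials in $X^{**}$ contain no leading term $\bar s$ for $s\in S$ as a sub-expression.

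First I would pin down the leading monomial of each family in $S$ under the weight order \eqref{eq:Order-weight}. For (R1), the leading monomial is $a(bc)$, so an irreducible word is forced to be left-normed $[x_1,\ldots,x_n]=(\dots((x_1x_2)x_3)\dots)x_n$. For (R2) with $x<y$, one has $\mathrm{wt}(xy)=(2,y,x)>(2,x,y)=\mathrm{wt}(yx)$, so $\overline{g_{xy}}=xy$, while (R2$'$) has leading monomial $xx$; together these forbid $x_1\le x_2$ in a left-normed irreducible word. For (R3), comparing $\mathrm{wt}((ax)y)=(|a|+2,y,ax)$ with $\mathrm{wt}((ay)x)=(|a|+2,x,ay)$ gives $\overline{t_{axy}}=(ax)y$, and (R3$'$) has leading monomial $(ax)x$.

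Next I would use the fact that in a left-normed word $[z_1,\ldots,z_n]$ the non-associative sub-expressions are precisely the initial left-normed prefixes $[z_1,\ldots,z_i]$ and the single letters $z_i$. Hence a sub-expression of the form $(ax)y$ with $|a|$ even, $a$ non-empty, arises exactly as a prefix $[z_1,\ldots,z_{2k+2}]$ for some $k\ge 1$, corresponding to the consecutive pair $(z_{2k+1},z_{2k+2})$. Thus (R3)/(R3$'$) rule out precisely those left-normed words in which some pair $(z_{2k-1},z_{2k})$ with $k\ge 2$ satisfies $z_{2k-1}\le z_{2k}$. Combining all constraints, $\mathrm{Irr}(S)$ consists of the left-normed words $[x_1,\ldots,x_n]$ with $x_{2k-1}>x_{2k}$ whenever $2k\le n$ and no condition on $x_n$ when $n$ is odd, which is exactly the set displayed in the corollary.

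The argument is essentially bookkeeping given Theorem~\ref{Thm:1}; the only point that really requires care is the correct identification of the leading monomials under the weight order and the observation that the relevant subwords of a left-normed monomial are themselves left-normed prefixes. No further composition checks are needed, since those have already been discharged in the proof of Theorem~\ref{Thm:1}.
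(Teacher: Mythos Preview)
Your argument is correct and is precisely the intended one: the paper states the corollary as an immediate consequence of Theorem~\ref{Thm:1} together with part (iii) of the Composition--Diamond Lemma, and your identification of the leading monomials and of the left-normed prefixes as the only relevant sub-expressions is exactly the bookkeeping that makes this explicit. Nothing further is required.
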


In other words, the universal pre-commutative envelope of a trivial algebra $A$ is isomorphic 
as a linear space to 
\[
T(A\wedge A)\otimes (A\oplus A\wedge A), 
\]
where $T(A\wedge A)$ is the tensor algebra of 
the space $A\wedge A$.

The following example shows that structure of $U_{(+)}(A)$
essentially depends on the multiplication in $A$
even if $A^n=0$ for some $n\geq 3$.  

\begin{example} 
Suppose $A$ is a commutative nilpotent algebra
with a multiplication $*$, and let 
$X=\{x_1,x_2,\ldots,x_n\}$ be a basis of $A$
such that $x_i*x_j = x_{i+j}$ or zero, if $i+j>n$.
Namely, 
$A\simeq t\Bbbk [t]/(t^{n+1})$, 
$x_i = t^i+(t^{n+1})$.
Then the defining relations of 
$U_{(+)}(A)$ are
\begin{enumerate}
  \item $a(bc)=(ab)c+(ba)c$, $a,b,c\in X^{**}$;
  \item $x_ix_j+x_jx_i=x_{i+j}$, $i+j\leq n$, $x_i,x_j\in X$;
  \item $x_ix_j+x_jx_i=0$, $i+j> n$, $x_i,x_j\in X$.
\end{enumerate}
In order to get a GSB, we have to add the following nonassociative polynomials:
\[
  x_ix_j=\frac{j}{i+j}x_{i+j},\ i+j\leq n,\ x_i,x_j\in X;
\]
\[
  x_ix_j=0,\ i+j> n, \ x_i,x_j\in X.
\]
\end{example}

Indeed, since 
$x_i(x_1x_1)=\frac{1}{2}x_ix_2$ 
and  $x_i(x_1x_1)=(x_ix_1+x_1x_i)x_1=x_{i+1}x_1$, 
we have $x_ix_2=2x_{i+1}x_1$.
By induction on $j\ge 2$, assume that $x_ix_j=jx_{i+j-1}x_1$ for all~$i$, then
\[ 
x_ix_{j+1}=
x_i(x_jx_1+x_1x_j)=x_{i+j}x_1+x_{i+1}x_j
=x_{i+j}x_1+jx_{i+j}x_1=(j+1)x_{i+j}x_1.
\]
Next, 
\[
x_{i+j}=x_ix_j+x_jx_i=jx_{i+j-1}x_1+ix_{i+j-1}x_1=(i+j)x_{i+j-1}x_1.
\]
Therefore,
$x_ix_j=jx_{i+j-1}x_1=\frac{j}{i+j}x_{i+j}$ 
for all $i$, $j$ such that 
$i+j\leq n$.

Finally, suppose $i+j> n$ 
and $j+1\leq n$, then  
\[
x_i(x_jx_1)=\frac{1}{j+1}x_ix_{j+1},\quad
x_{i}(x_jx_1)=(x_ix_j+x_jx_i)x_1=0
\]
Hence, $x_ix_{j+1}=x_{j+1}x_i=0$. Similarly, 
it can be obtained that 
$x_ix_n=x_nx_i=0$ for $i+1<n$. 

Therefore, in this particular case we have
$U_{(+)}(A)^{(+)}\simeq A$
in contrast to the case when $A$ has zero multiplication.

\section{Embedding of nilpotent algebras into Zinbiel algebras}

The main purpose of this section is to prove that 
a nilpotent commutative algebra embeds into its 
universal enveloping Zinbiel algebra although 
there is no PBW-property. 

Let us say that an algebra $A$ has a positive filtration 
if there is a descending chain of subspaces 
\[
A=F^1A \supset F^2A \supset \dots \supset F^nA \supset F^{n+1}A \supset \dots 
\]
such that 
$F^iA\cdot F^jA \subseteq F^{i+j}A$
and $\bigcap\limits_{n\ge 1} F^nA = 0$. 

For example, if $A$ is a nilpotent algebra then such a filtration exists: $F^iA = A^i$, $i=1,2,\dots $.

For every algebra with a positive filtration 
one may construct its associated graded algebra 
in the ordinary way:
\[
\mathrm{gr}\,A = \bigoplus _{n\ge 1} F^{n}A/F^{n+1}A, 
\quad (a+F^{i+1}A)(b+F^{j+1}A) = ab + F^{i+j+1}A, 
\]
for $a\in F^iA$, $b\in F^jA$.
The linear space $A$ is naturally isomorphic to the space $\mathrm{gr}\,A$. If the isomorphism preserves multiplication then we say the filtered algebra $A$
is graded.

\begin{theorem}\label{Thm4.2}
For every commutative algebra 
$A$ 
with a positive filtration
there exists a Zinbiel algebra $B$ such that
$A$ is a subalgebra of $B^{(+)}$.
\end{theorem}

\begin{proof}
First, choose a basis $X$ of the space $A$ agreed with 
the filtration,
 i.e.,
\[
X=X_1\cup X_2 \cup \ldots ,
\]
where $\bigcup_{i\geq k}X_i$ is a basis of $F^kA$ for $k\ge 1$.
Denote by $*$ the multiplication in $A$. 
If $x\in X_k$ and $y\in X_m$ then 
$x*y$ belongs to the linear span 
of $X_{k+m}\cup X_{k+m+1}\cup \ldots$, so there 
is a unique (finite) presentation 
\[
x*y = (x*y)_{k+m} + (x*y)_{k+m+1} + \dots , 
\]
where $(x*y)_i$ is in the linear span of $X_i$.

Next, consider the set 
\[
\hat{X}=\{ x_{i}^{(k)} \mid 
 x\in X_k, k\ge 1, i\ge k \}
\]
and construct the 
polynomial algebra $\Bbbk [\hat X]$. 
This algebra is graded: the degree function 
of a monomial is given by 
\begin{equation}\label{eq:degreeWt}
\deg x^{(k_1)}_{i_1}x^{(k_2)}_{i_2}\ldots x^{(k_m)}_{i_m}
= i_1+\dots + i_m.
\end{equation}
Consider the set $\hat{S}$ of the following elements in $\Bbbk [\hat{X} ]$: 
\[
s_l(x,y) =  \sum_{i+j=l} x^{(k)}_i y^{(m)}_{j}-\sum_{p=m+k}^{l}(x*y)^{(p)}_{l}
\]
where $x\in X_k$, $y\in X_{m}$, $k,m \ge 1$, $l\ge k+m$.

Since all polynomials in 
$\hat{S}$ are homogeneous relative to the degree 
function \eqref{eq:degreeWt}, 
the algebra 
$\hat{A}=\Bbbk [ \hat{X} ]/(\hat{S})$ 
inherits the grading:
\[
\hat A = \bigoplus _{n\ge 1} \hat A_n, 
\]
where $\hat A_n$ is spanned 
by the images of all monomials $u$ such that $\deg u=n$.

It was shown in \cite{Bergman} that, in noncommutative setting, the set $\hat S$ is a Gr\"obner basis 
in $\Bbbk [\hat X]$ relative to a certain ordering 
of monomials. In the commutative case, the same 
statement remains valid. In particular, 
every linear linear form (a nontrivial linear 
combination of elements from $X$) is nonzero in $\hat A$.

Finally, consider the algebra of formal power series 
(without constant terms) 
$t\hat A[[t]]$ equipped with the following Rota--Baxter 
operator:
\begin{equation}\label{eq:RB_series}
R: \sum\limits_{n\ge 1} f_n t^n \mapsto \sum\limits_{n\ge 1} \dfrac{1}{n}f_n t^n, \quad f_n\in \hat A.
\end{equation}
Define the linear mapping 
\[
    \varphi : A \to t\hat{A}[[t]]
\]
as follows: for $x\in X_k$, let
\[
 \varphi(x)=\sum_{i\ge k} i x^{(k)}_i t^i .
\]
This is an injective map since the set $\hat X$
is linearly independent in $\hat A$.

The commutative algebra $t\hat A[[t]]$ equipped with 
the Rota--Baxter operator \eqref{eq:RB_series}
is a Zinbiel algebra: $B=t\hat A[[t]]_R$. 
It is straightforward to check that 
$\varphi $ is a homomorphism of algebras.
Indeed, let $x\in X_k$, $y\in X_m$, then
\[
R(\varphi(x))\varphi(y)
= \sum_{i\geq k} x^{(k)}_it^i 
 \sum_{j\geq m}  j  y^{(m)}_j t^{j}
 = 
\sum_{l\geq k+m}  \sum_{i+j=l} 
j x^{(k)}_i y^{(m)}_j t^{l},
\]
and, similarly,
\[
\varphi(x)R(\varphi(y))  
= \sum_{l\geq k+m}  \sum_{i+j=l} 
i x^{(k)}_i y^{(m)}_j t^{l},
\]
so 
\[
R(\varphi(x))\varphi(y) + \varphi(x)R(\varphi(y))
=
\sum_{l\geq k+m}  \sum_{i+j=l} 
 l x^{(k)}_i y^{(m)}_j t^{l}
=
\sum_{l\geq k+m} l \bigg( \sum_{p= k+m}^l (x*y)_l^{(p)}
\bigg ) t^l.
\]
On the other hand,
\[
\varphi(x*y)=
\sum_{p\geq k+m} \varphi( (x*y)^{(p)} )=
\sum_{p\geq k+m}\sum_{l\geq p}l(x*y)^{(p)}_{l} t^l
=\sum_{l\geq k+m}\sum_{p=k+m}^l l(x*y)^{(p)}_{l} t^l.
\]
Hence, 
$R(\varphi(x))\varphi(y) + \varphi(x)R(\varphi(y)) = \varphi(x*y)$ as required.
\end{proof}

\begin{corollary}
Every commutative algebra  
with positive filtration embeds into 
its universal enveloping Zinbeil algebra.
\end{corollary}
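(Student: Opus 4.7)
The plan is to deduce this corollary from Theorem \ref{Thm4.2} by a standard universal-property argument, so essentially no additional computation is needed. Let $A$ be a commutative algebra equipped with a positive filtration, and let $i: A\to U_{(+)}(A)^{(+)}$ denote the canonical homomorphism from $A$ into the commutator-like image of its universal enveloping Zinbiel algebra. I would want to show that $i$ is injective.

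First, I would apply Theorem \ref{Thm4.2} to produce a Zinbiel algebra $B$ and a homomorphism $\varphi: A\to B^{(+)}$ which is injective on the underlying linear space. By the universal property that defines $U_{(+)}(A)$ as the left adjoint to the functor $Z\mapsto Z^{(+)}$, there exists a unique Zinbiel homomorphism $\tilde{\varphi}: U_{(+)}(A)\to B$ such that $\varphi = \tilde{\varphi}^{(+)}\circ i$, that is, the diagram of commutative-algebra homomorphisms factors through the canonical map into the envelope.

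From the factorization $\varphi = \tilde{\varphi}^{(+)}\circ i$ and the injectivity of $\varphi$ established in Theorem \ref{Thm4.2}, it follows immediately that $i$ is injective as a linear map, hence as a homomorphism of commutative algebras. Thus $A$ embeds into $U_{(+)}(A)^{(+)}$, which is what the corollary asserts.

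I do not anticipate any serious obstacle here: all the genuine work lies in Theorem \ref{Thm4.2} (namely, the construction of $\hat{A}$ via the relations $s_l(x,y)$, the verification that $\hat{S}$ is in fact a Gröbner basis so that linear forms survive in $\hat{A}$, and the identity $R(\varphi(x))\varphi(y)+\varphi(x)R(\varphi(y))=\varphi(x*y)$). Once that is in hand, the corollary is a formality, and the only thing to be careful about is to phrase the universal property in the right direction, noting that a $V$-morphism $A\to Z^{(+)}$ corresponds bijectively to a $\mathrm{pre}\,V$-morphism $U_{(+)}(A)\to Z$.
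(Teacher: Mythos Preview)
Your proposal is correct and follows exactly the same line as the paper: invoke Theorem~\ref{Thm4.2} to obtain an injective $\varphi:A\to B^{(+)}$, factor it through the canonical map $i:A\to U_{(+)}(A)$ via the universal property, and conclude that $i$ is injective because $\varphi$ is. The paper phrases the last step contrapositively (``if $i$ was not injective then so is $\varphi$''), but the argument is identical.
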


\begin{proof}
Suppose $i: A\to U_{(+)}(A)$  is the canonical 
homomorphism from $A$ to its universal 
enveloping Zinbiel algebra. Then for every 
Zinbiel algebra $B$ and for every homomorphism 
$\varphi : A\to B^{(+)}$ 
there exists a unique homomorphism 
$\psi : U_{(+)}(A) \to B$ of Zinbiel algebras 
such that $\psi(i(a))=\varphi(a)$ for every $a\in A$. 
If $i$ was not injective then so is $\varphi $, 
but at least one injective $\varphi $
exists by Theorem~\ref{Thm4.2}.
\end{proof}
      
\begin{corollary}
Suppose $A$ is a finite-dimensional commutative algebra. Then algebra $A$ is embedded into a Zinbiel algebra if only if $A$ is nilpotent.
\end{corollary}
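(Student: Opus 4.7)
The plan is to prove each implication separately by combining two facts already in place: the preceding corollary (which handles the easy direction via positive filtrations) and the short Zinbiel identity computation from the opening of Section~4 (which supplies an obstruction to embedding whenever a nonzero idempotent is present).

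For the ``if'' direction, assume $A$ is nilpotent, so $A^{N}=0$ for some $N$. Setting $F^{i}A := A^{i}$ gives a descending chain with $F^{i}A\cdot F^{j}A\subseteq F^{i+j}A$ and $\bigcap_{n\ge 1} F^{n}A = 0$, i.e.\ a positive filtration. The previous corollary then delivers an embedding of $A$ into its universal enveloping Zinbiel algebra, as required.

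For the ``only if'' direction I would argue by contrapositive: suppose $A$ is a finite-dimensional commutative algebra that is \emph{not} nilpotent, and deduce that $A$ cannot sit inside any $Z^{(+)}$. The key intermediate claim is that such an $A$ must contain a nonzero idempotent. Indeed, the Jacobson radical $\mathrm{rad}(A)$ is nilpotent (standard in the finite-dimensional case), so if $A$ itself is not nilpotent then $A/\mathrm{rad}(A)$ is a nonzero finite-dimensional semisimple commutative algebra, hence a finite product of field extensions of $\Bbbk$. Its unit is a nonzero idempotent, and by the classical lifting-of-idempotents lemma for finite-dimensional (more generally, semiperfect) algebras, it lifts to a nonzero $e\in A$ with $e*e=e$.

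Having produced $e$, the conclusion is immediate from the computation recorded at the beginning of Section~4: for any Zinbiel algebra $Z$ and any homomorphism $\varphi\colon A\to Z^{(+)}$, setting $x=\varphi(e)$ gives $x=\varphi(e*e)=2xx$, while the Zinbiel identity~\eqref{eq:ZinbIdent} yields $x(xx)=2(xx)x=xx$, forcing $xx=2xx$ and hence $x=0$. Thus $\varphi(e)=0$, so $\varphi$ fails to be injective. Consequently no embedding of $A$ into any $Z^{(+)}$ exists, completing the contrapositive. The only step that is not a mechanical invocation of earlier material is the lifting of the idempotent from $A/\mathrm{rad}(A)$ to $A$, and this is a well-known standard fact rather than a genuine obstacle.
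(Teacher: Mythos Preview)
Your proposal is correct and follows essentially the same argument as the paper: the nilpotent case is handled via the standard positive filtration $F^iA=A^i$ and Theorem~\ref{Thm4.2} (equivalently, the preceding corollary), while the non-nilpotent case is obstructed by lifting the identity of $A/\mathrm{rad}(A)$ to a nonzero idempotent and invoking the Zinbiel computation from the start of Section~4. The only difference is that you spell out the idempotent-lifting step in slightly more detail than the paper does.
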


\begin{proof}
If $A$ is nilpotent then use Theorem~\ref{Thm4.2} 
applied to the standard positive filtration.   
Conversely, if $A$ is not nilpotent 
If $A$ is not nilpotent then it contains a 
non-zero idempotent, e.g., lifted from the identity element 
of $A/\mathrm{rad}\,(A)$. The presence 
of an idempotent prevents an embedding of $A$
into a pre-commutative algebra.
\end{proof}

 \subsection*{Acknowledgments}
 The study was supported by a grant from the Russian Science Foundation No.~23-71-10005, \url{https://rscf.ru/project/23-71-10005/}. The authors are grateful 
to V.~Yu. Gubarev for discussions and useful comments. The second author acknowledges the hospitality of UAEU where an essential part of the work 
was done.

\end{document}